\documentclass{article}

\usepackage{amsfonts}
\usepackage{amssymb}
\usepackage{amsmath}
\usepackage{amstext}
\usepackage{graphicx}
\usepackage{latexsym}
\usepackage{amsthm}

\setcounter{MaxMatrixCols}{10}

\newtheorem{theorem}{Theorem}

\newtheorem{definition}[theorem]{Definition}
\newtheorem{example}[theorem]{Example}

\newtheorem{lemma}[theorem]{Lemma}

\newtheorem{remark}[theorem]{Remark}

\input tcilatex

\title{A generalization of Gauss' divergence theorem.}
\author{Vieri Benci, Lorenzo Luperi Baglini}

\begin{document}
\maketitle
\begin{abstract}
This paper is devoted to the proof Gauss' divergence theorem in the framework
of "ultrafunctions". They are a new kind of generalized functions, which
have been introduced recently in \cite{ultra} and developed in \cite{belu2012}, 
\cite{belu2013} and \cite{milano}. Their peculiarity is that they are based
on a non-Archimedean field, namely on a field which contains infinite and
infinitesimal numbers. Ultrafunctions have been introduced to provide
generalized solutions to equations which do not have any solutions, not even
among the distributions.
\end{abstract}

\section{Introduction}

In many problems of mathematical physics, the notion of function is not
sufficient and it is necessary to extend it. Among people working in partial
differential equations, the theory of distribution of Schwartz and the
notion of weak solution are the main tools to be used when equations do not
have classical solutions.

Usually, these equations do not have classical solutions since they develop
singularities. The notion of weak solutions allows to obtain existence
results, but uniqueness may be lost; also, these solutions might violate the
conservation laws. As an example let us consider the following scalar
conservation law:%
\begin{equation}
\frac{\partial u}{\partial t}+\func{div}F(t,x,u)=0,  \label{B}
\end{equation}%
where $F:\mathbb{R}_{t}\times \mathbb{R}_{x}^{N}\times \mathbb{R}%
_{u}\rightarrow \mathbb{R}_{x}^{N}$ satisfies the following assumption: $%
F(t,x,0)=0.$ A classical solution $u(t,x)$ is unique and, if it has compact
support, it preserves the quantity $Q=\int u\ dx.$ However, at some time a
singularity may appear and the phenomenon cannot be longer described by a
classical solution. The notion of weak solution becomes necessary, but the
problem of uniqueness becomes a central issue. Moreover, in general, $Q$ is
not preserved. From a technical point of view, the classical proof of
conservation of $Q$ fails since we cannot apply Gauss' divergence theorem to
weak solutions.

In this paper we suggest a method to overcome these problems. This method
consists in using a different kind of generalized solutions, namely
functions which belong to the space of "ultrafunctions". Ultrafunctions
have been introduced recently in \cite{ultra} and developed in \cite%
{belu2012}, \cite{belu2013}, \cite{milano}, \cite{algebra} and \cite{beyond}%
. The peculiarity of ultrafunctions is that they are based on a non-Archimedean field, namely a field which contains infinite and infinitesimal
numbers. The ultrafunctions have been introduced to provide generalized
solutions to equations which do not have any solutions, not even among distributions. However they provide also uniqueness in problems which have
more than one weak solution. Moreover, we will state a generalization of
Gauss' divergence theorem which can be applied to the study of partial
differential equations (see e.g. \cite{BLta}). Here we give a simple
application to equation (\ref{B}) using an elementary notion of generalized
solution (see section \ref{asa}). In a paper in preparation, we will give a
more appropriate notion of generalized solution of an evolution problem and
we will study in details the properties of the generalized solutions of
Burgers' equation.

\section{$\Lambda $-theory\label{lt}}

In this section we present the basic notions of non-Archimedean mathematics
and of nonstandard analysis following a method inspired by \cite{BDN2003}
(see also \cite{benci99}, \cite{ultra} and \cite{belu2012}).

\subsection{Non-Archimedean Fields\label{naf}}

Here, we recall the basic definitions and facts regarding non-Archimedean
fields. In the following, ${\mathbb{K}}$ will denote an ordered field. We
recall that such a field contains (a copy of) the rational numbers. Its
elements will be called numbers.

\begin{definition}
Let $\mathbb{K}$ be an ordered field. Let $\xi \in \mathbb{K}$. We say that:

\begin{itemize}
\item $\xi $ is infinitesimal if, for all positive $n\in \mathbb{N}$, $|\xi
|<\frac{1}{n}$;

\item $\xi $ is finite if there exists $n\in \mathbb{N}$ such that $|\xi |<n$%
;

\item $\xi $ is infinite if, for all $n\in \mathbb{N}$, $|\xi |>n$
(equivalently, if $\xi $ is not finite).
\end{itemize}

Moreover we let $x\sim y$ iff $|x-y|$ is infinitesimal. In this case we say that $x,y$ are infinitely close.
\end{definition}

Clearly, the relation "$\sim $" of infinite closeness is an equivalence
relation.

\begin{definition}
An ordered field $\mathbb{K}$ is called non-Archimedean if it contains an
infinitesimal $\xi \neq 0$.
\end{definition}

It is easily seen that all infinitesimal are finite, that the inverse of an
infinite number is a nonzero infinitesimal number and that the inverse of a
nonzero infinitesimal number is infinite.

\begin{definition}
A superreal field is an ordered field $\mathbb{K}$ that properly extends $\mathbb{R}$.
\end{definition}

It is easy to show, due to the completeness of $\mathbb{R}$, that there are
nonzero infinitesimal numbers and infinite numbers in any superreal field. Moreover, we have the following result on finite numbers.

\begin{theorem}
If $\mathbb{K}$ is a superreal field, every finite number $\xi \in \mathbb{K}
$ is infinitely close to a unique real number $r\sim \xi $.
\end{theorem}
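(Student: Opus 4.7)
The plan is to use the Dedekind completeness of $\mathbb{R}$ to construct the real number $r$ as a supremum, then verify infinitesimal closeness and uniqueness separately.

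First, given a finite $\xi\in\mathbb{K}$, I would set $A=\{s\in\mathbb{R}:s<\xi\}$, where the inequality is interpreted in the ordered field $\mathbb{K}\supset\mathbb{R}$. Because $\xi$ is finite, there is some $n\in\mathbb{N}$ with $|\xi|<n$, which implies that $-n\in A$ (so $A\neq\emptyset$) and that $n$ is an upper bound for $A$ in $\mathbb{R}$. By the completeness of $\mathbb{R}$, the supremum $r:=\sup A\in\mathbb{R}$ exists.

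Next, I would prove $r\sim\xi$ by contradiction. Suppose $|r-\xi|$ is not infinitesimal. Then there exists $n\in\mathbb{N}$ with $|r-\xi|>1/n$. Split into two cases according to the sign of $r-\xi$. If $\xi<r-1/n$, then every $s\in A$ satisfies $s<\xi<r-1/n$, so $r-1/n$ would be an upper bound for $A$ strictly smaller than $\sup A$, a contradiction. If instead $r+1/n<\xi$, then $r+1/(2n)\in\mathbb{R}$ satisfies $r+1/(2n)<\xi$, so $r+1/(2n)\in A$, contradicting the fact that $r$ is an upper bound of $A$. Hence $|r-\xi|$ must be infinitesimal, i.e.\ $r\sim\xi$.

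Finally, for uniqueness, if $r_{1},r_{2}\in\mathbb{R}$ both satisfy $r_{i}\sim\xi$, then $r_{1}-r_{2}=(r_{1}-\xi)-(r_{2}-\xi)$ is infinitesimal; but the only real infinitesimal is $0$, by the Archimedean property of $\mathbb{R}$ (for any real $\varepsilon\neq 0$ one has $1/|\varepsilon|<n$ for some $n\in\mathbb{N}$, hence $|\varepsilon|>1/n$). Therefore $r_{1}=r_{2}$.

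The only mildly delicate point is the case split in the second paragraph, which requires being careful that comparisons like $s<\xi$ and $r+1/(2n)<\xi$ are legitimate in $\mathbb{K}$ and that the resulting inequalities transfer back to $\mathbb{R}$ through the supremum characterization; everything else is a routine application of completeness in $\mathbb{R}$ together with the definition of \textquotedblleft infinitesimal\textquotedblright\ in $\mathbb{K}$.
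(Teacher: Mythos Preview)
Your proof is correct and is the standard argument: construct $r=\sup\{s\in\mathbb{R}:s<\xi\}$ via Dedekind completeness, check $r\sim\xi$ by the supremum characterization, and get uniqueness from the fact that $0$ is the only real infinitesimal. The paper does not actually give a proof of this theorem; it states it as a basic fact, having just remarked that such results follow ``due to the completeness of $\mathbb{R}$''. Your argument is precisely the intended unpacking of that remark, so there is nothing to compare beyond noting that you have supplied the details the paper omits. One cosmetic point: in the second case you could use $r+1/n$ directly (since $r+1/n<\xi$ already puts it in $A$), rather than $r+1/(2n)$; either works.
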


\subsection{The $\Lambda $-limit\label{OL}}

In this section we will introduce a particular superreal field $\mathbb{K}$
and we will analyze its main properties by means of $\Lambda $-theory, in
particular by means of the notion of $\Lambda $-limit (for complete proofs
and for further properties of the $\Lambda $-limit, the reader is referred
to \cite{ultra}, \cite{belu2012}, \cite{belu2013}, \cite{milano}, \cite%
{algebra}).

We set 
\begin{equation*}
\mathfrak{L}=\mathcal{P}_{\omega }(\mathbb{R}),
\end{equation*}%
where $\mathcal{P}_{\omega }(\mathbb{R}^{N})$ denotes the family of finite
subsets of $\mathbb{R}$. We will refer to $\mathfrak{L}$ as the "parameter
space". Clearly $\left( \mathfrak{L},\subset \right) $ is a directed set%
\footnote{%
We recall that a directed set is a partially ordered set $(D,\prec )$ such
that, $\forall a,b\in D,\ \exists c\in D$ such that 
\begin{equation*}
a\prec c\ \ \text{and}\ \ b\prec c.
\end{equation*}%
}. A function $\varphi :D\rightarrow E$ defined on a directed set will be
called \textit{net }(with values in $E$). A net $\varphi $ is the
generalization of the notion of sequence and it has been constructed in such
a way that the Weierstrass definition of limit makes sense: if $\varphi
_{\lambda }$ is a real net, we have that 
\begin{equation*}
\underset{\lambda \rightarrow \infty }{\lim }\varphi _{\lambda }=L
\end{equation*}%
if and only if 
\begin{equation}
\forall \varepsilon >0\text{ }\exists \lambda _{0}>0\text{\ such that, }%
\forall \lambda >\lambda _{0},\ \left\vert \varphi _{\lambda }-L\right\vert
<\varepsilon .  \label{limite}
\end{equation}

The key notion of $\Lambda $-theory is the $\Lambda $-limit. Also the $%
\Lambda $-limit is defined for real nets but it differs from the Weierstrass
limit defined by (\ref{limite}) mainly for the fact that there exists a non-Archimedean field in which every real net admits a limit.

We present the notion of $\Lambda $-limit axiomatically:

\bigskip

{\Large Axioms of\ the }$\Lambda ${\Large -limit}

\begin{itemize}
\item \textsf{(}$\Lambda $-\textsf{1)}\ \textbf{Existence Axiom.}\ \textit{%
There is a superreal field} $\mathbb{K}\supset \mathbb{R}$ \textit{such that
every net }$\varphi :\mathfrak{L}\rightarrow \mathbb{R}$\textit{\ has a
unique limit }$L\in \mathbb{K}{\ }($\textit{called the} "$\Lambda $-limit" 
\textit{of}\emph{\ }$\varphi .)$ \textit{The} $\Lambda $-\textit{limit of }$%
\varphi $\textit{\ will be denoted as} 
\begin{equation*}
L=\lim_{\lambda \uparrow \Lambda }\varphi (\lambda ).
\end{equation*}%
\textit{Moreover we assume that every}\emph{\ }$\xi \in \mathbb{K}$\textit{\
is the }$\Lambda $-\textit{limit\ of some real function}\emph{\ }$\varphi :%
\mathfrak{L}\rightarrow \mathbb{R}$\emph{. }

\item ($\Lambda $-2)\ \textbf{Real numbers Axiom}. \textit{If }$\varphi
(\lambda )$\textit{\ is} \textit{eventually} \textit{constant}, \textit{%
namely} $\exists \lambda _{0}\in \mathfrak{L},r\in \mathbb{R}$ \textit{such
that} $\forall \lambda \supset \lambda _{0},\ \varphi (\lambda )=r,$ \textit{%
then}%
\begin{equation*}
\lim_{\lambda \uparrow \Lambda }\varphi (\lambda )=r.
\end{equation*}

\item ($\Lambda $-3)\ \textbf{Sum and product Axiom}.\ \textit{For all }$%
\varphi ,\psi :\mathfrak{L}\rightarrow \mathbb{R}$\emph{: }%
\begin{eqnarray*}
\lim_{\lambda \uparrow \Lambda }\varphi (\lambda )+\lim_{\lambda \uparrow
\Lambda }\psi (\lambda ) &=&\lim_{\lambda \uparrow \Lambda }\left( \varphi
(\lambda )+\psi (\lambda )\right) ; \\
\lim_{\lambda \uparrow \Lambda }\varphi (\lambda )\cdot \lim_{\lambda
\uparrow \Lambda }\psi (\lambda ) &=&\lim_{\lambda \uparrow \Lambda }\left(
\varphi (\lambda )\cdot \psi (\lambda )\right) .
\end{eqnarray*}
\end{itemize}

The proof that this set of axioms $\{$($\Lambda $-1)\textsf{,}($\Lambda $%
-2),($\Lambda $-3)$\}$ is consistent can be found e.g. in \cite{ultra} or in 
\cite{belu2013}.

\subsection{Natural extension of sets and functions}

The notion of $\Lambda $-limit can be extended to sets and functions in the
following way:

\begin{definition}
\label{limito}Let $E_{\lambda },$ $\lambda \in \mathfrak{L},$ be a family of
sets in $\mathbb{R}^{N}.$ We pose%
\begin{equation*}
\lim_{\lambda \uparrow \Lambda }\ E_{\lambda }:=\left\{ \lim_{\lambda
\uparrow \Lambda }\psi (\lambda )\ |\ \psi (\lambda )\in E_{\lambda
}\right\} .
\end{equation*}%
A set which is a $\Lambda $-\textit{limit\ is called \textbf{internal}.} In
particular if, $\forall \lambda \in \mathfrak{L,}$ $E_{\lambda }=E,$ we set $%
\lim_{\lambda \uparrow \Lambda }\ E_{\lambda }=E^{\ast },\ $namely%
\begin{equation*}
E^{\ast }:=\left\{ \lim_{\lambda \uparrow \Lambda }\psi (\lambda )\ |\ \psi
(\lambda )\in E\right\} .
\end{equation*}%
$E^{\ast }$ is called the \textbf{natural extension }of $E.$
\end{definition}

Notice that, while the $\Lambda $-limit of a sequence of numbers with constant value $r\in\mathbb{R}$ is $r$, the $\Lambda$-limit of a constant sequence of sets with value $E\subseteq\mathbb{R}$ gives a larger set,
namely $E^{\ast }$. In general, the inclusion $E\subseteq E^{\ast }$ is
proper.

This definition, combined with axiom ($\Lambda $-1$)$, entails that 
\begin{equation*}
\mathbb{K}=\mathbb{R}^{\ast }.
\end{equation*}

Given any set $E,$ we can associate to it two sets: its natural extension $%
E^{\ast }$ and the set $E^{\sigma },$ where%
\begin{equation}
E^{\sigma }=\left\{ x^{\ast }\ |\ x\in E\right\} .  \label{sigmaS}
\end{equation}

Clearly $E^{\sigma }$ is a copy of $E;$ however it might be different as a set
since, in general, $x^{\ast }\neq x.$ Moreover $E^{\sigma }\subset E^{\ast }$
since every element of $E^{\sigma }$ can be regarded as the $\Lambda $%
-limit\ of a constant sequence.

\begin{definition}
\label{limito2}Let 
\begin{equation*}
f_{\lambda }:\ E_{\lambda }\rightarrow \mathbb{R},\ \ \lambda \in \mathfrak{L%
},
\end{equation*}%
be a family of functions. We define a function%
\begin{equation*}
f:\left( \lim_{\lambda \uparrow \Lambda }\ E_{\lambda }\right) \rightarrow 
\mathbb{R}^{\ast }
\end{equation*}%
as follows: for every $\xi \in \left( \lim_{\lambda \uparrow \Lambda }\
E_{\lambda }\right) $ we pose%
\begin{equation*}
f\left( \xi \right) :=\lim_{\lambda \uparrow \Lambda }\ f_{\lambda }\left(
\psi (\lambda )\right) ,
\end{equation*}%
where $\psi (\lambda )$ is a net of numbers such that 
\begin{equation*}
\psi (\lambda )\in E_{\lambda }\ \ \text{and}\ \ \lim_{\lambda \uparrow
\Lambda }\psi (\lambda )=\xi .
\end{equation*}%
A function which is a $\Lambda $-\textit{limit\ is called \textbf{internal}.}
In particular if, $\forall \lambda \in \mathfrak{L,}$ 
\begin{equation*}
f_{\lambda }=f,\ \ \ \ f:\ E\rightarrow \mathbb{R},
\end{equation*}%
we set 
\begin{equation*}
f^{\ast }=\lim_{\lambda \uparrow \Lambda }\ f_{\lambda }.
\end{equation*}%
$f^{\ast }:E^{\ast }\rightarrow \mathbb{R}^{\ast }$ is called the \textbf{%
natural extension }of $f.$
\end{definition}

More in general, the $\Lambda $-limit can be extended to a larger family of
nets; to this aim, we recall that the superstructure on $\mathbb{R}$ is
defined as follows:

\begin{equation*}
\mathbb{U}=\dbigcup_{n=0}^{\infty }\mathbb{U}_{n}
\end{equation*}%
where $\mathbb{U}_{n}$ is defined by induction as follows:%
\begin{eqnarray*}
\mathbb{U}_{0} &=&\mathbb{R}\text{;} \\
\mathbb{U}_{n+1} &=&\mathbb{U}_{n}\cup \mathcal{P}\left( \mathbb{U}%
_{n}\right) .
\end{eqnarray*}%
Here $\mathcal{P}\left( E\right) $ denotes the power set of $E.$ Identifying
the couples with the Kuratowski pairs and the functions and the relations
with their graphs, it follows that{\ }$\mathbb{U}$ contains almost every
usual mathematical object.

We can extend the definition of the $\Lambda $-limit to any bounded net%
\footnote{%
We recall that a net $\varphi :\mathfrak{X}\rightarrow \mathbb{U}$ is
bounded if there exists $n$ such that $\forall \lambda \in \mathfrak{X}%
,\varphi (\lambda )\in \mathbb{U}_{n}$.} of mathematical objects in {$%
\mathbb{U}$}. To this aim, let us consider a net%
\begin{equation}\label{carpa}
\varphi :\mathfrak{X}\rightarrow {\mathbb{U}}_{n}.
\end{equation}%
We will define $\lim\limits_{\lambda \uparrow \Lambda }\varphi (\lambda )$
by induction on $n$. For $n=0,$ $\lim\limits_{\lambda \uparrow \Lambda
}\varphi (\lambda )$ is defined by the axioms \textsf{(}$\Lambda $-\textsf{%
1),}($\Lambda $-2),($\Lambda $-3); so by induction we may assume that the
limit is defined for $n-1$ and we define it for the net (\ref{carpa}) as
follows:%
\begin{equation}
\lim_{\lambda \uparrow \Lambda }\varphi (\lambda )=\left\{ \lim_{\lambda
\uparrow \Lambda }\psi (\lambda )\ |\ \psi :\mathfrak{X}\rightarrow \mathcal{%
\mathbb{U}}_{n-1}\text{ and}\ \forall \lambda \in \mathfrak{X},\ \psi
(\lambda )\in \varphi (\lambda )\right\} .  \label{limitu}
\end{equation}

\begin{definition}
A mathematical entity (number, set, function or relation) which is the $%
\Lambda $-limit of a net is called \textbf{internal}.
\end{definition}

Let us note that, if $\left( f_{\lambda }\right) $, $\left( E_{\lambda
}\right) $ are, respectively, a net of functions and a net of sets, the $%
\Lambda -$limit of these nets defined by $\left( \ref{limitu}\right) $
coincides with the $\Lambda -$limit given by Definitions \ref{limito} and %
\ref{limito2}. The following theorem is a fundamental tool in using the $%
\Lambda $-limit:

\begin{theorem}
\label{limit}\textbf{(Leibniz Principle)} Let $\mathcal{R}$ be a relation in 
{$\mathbb{U}$}$_{n}$ for some $n\geq 0$ and let $\varphi $,$\psi :\mathfrak{X%
}\rightarrow {\mathbb{U}}_{n}$. If 
\begin{equation*}
\forall \lambda \in \mathfrak{X},\ \varphi (\lambda )\mathcal{R}\psi
(\lambda )
\end{equation*}%
then%
\begin{equation*}
\left( \underset{\lambda \uparrow \Lambda }{\lim }\varphi (\lambda )\right) 
\mathcal{R}^{\ast }\left( \underset{\lambda \uparrow \Lambda }{\lim }\psi
(\lambda )\right) .
\end{equation*}
\end{theorem}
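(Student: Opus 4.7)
The plan is to unwind the inductive definition \eqref{limitu} of the $\Lambda$-limit on internal objects, identify the relation $\mathcal{R}$ with its graph of Kuratowski pairs, and reduce the Leibniz Principle to the statement that the $\Lambda$-limit commutes with the pair construction.

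First I would spell out $\mathcal{R}^{\ast}$ using \eqref{limitu}: since $\mathcal{R}^{\ast}$ is by definition the $\Lambda$-limit of the constant net $\lambda \mapsto \mathcal{R}$, the inductive formula yields
\begin{equation*}
\mathcal{R}^{\ast}=\left\{\lim_{\lambda\uparrow\Lambda}\theta(\lambda)\ \Big|\ \theta:\mathfrak{X}\to \mathbb{U}_{n-1},\ \theta(\lambda)\in\mathcal{R}\ \forall\lambda\right\}.
\end{equation*}
Then I would set $\theta(\lambda):=(\varphi(\lambda),\psi(\lambda))$, the Kuratowski pair, which by hypothesis is an element of $\mathcal{R}$ for every $\lambda$. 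Consequently $\lim_{\lambda\uparrow\Lambda}\theta(\lambda)\in \mathcal{R}^{\ast}$, and the theorem follows provided we can identify this $\Lambda$-limit with the pair of $\Lambda$-limits.

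The core lemma I would prove is the commutation
\begin{equation*}
\lim_{\lambda\uparrow\Lambda}\bigl(\varphi(\lambda),\psi(\lambda)\bigr)=\bigl(\lim_{\lambda\uparrow\Lambda}\varphi(\lambda),\lim_{\lambda\uparrow\Lambda}\psi(\lambda)\bigr).
\end{equation*}
Since $(a,b)=\{\{a\},\{a,b\}\}$, this splits into two set-level identities, both handled by \eqref{limitu}: namely $\lim\{\varphi(\lambda)\}=\{\lim\varphi(\lambda)\}$ (immediate, as the only admissible selector is $\varphi$ itself) and $\lim\{\varphi(\lambda),\psi(\lambda)\}=\{\lim\varphi(\lambda),\lim\psi(\lambda)\}$. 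For the latter, the inclusion $\supseteq$ is obvious; for $\subseteq$ one takes a selector $\eta(\lambda)\in\{\varphi(\lambda),\psi(\lambda)\}$, partitions $\mathfrak{X}$ into $A=\{\lambda:\eta(\lambda)=\varphi(\lambda)\}$ and its complement, and argues that on one of these two subnets $\eta$ is eventually equal to $\varphi$ or to $\psi$ in the sense that forces the $\Lambda$-limit via axioms $(\Lambda\text{-}1),(\Lambda\text{-}2),(\Lambda\text{-}3)$ to coincide with $\lim\varphi$ or $\lim\psi$.

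The main obstacle is precisely this last step: one has to show that the $\Lambda$-limit of a selector that alternates between two nets must agree with the limit of one of them, and not produce an unwanted third value. Concretely, this uses the ultrafilter-like coherence implicit in the existence and uniqueness clause of $(\Lambda\text{-}1)$ combined with the linearity/multiplicativity in $(\Lambda\text{-}3)$ applied to characteristic functions of $A$ and $\mathfrak{X}\setminus A$. Once this commutation lemma is in place, the Leibniz Principle is an immediate consequence; the induction on $n$ in \eqref{limitu} ensures that the same argument applies regardless of the superstructure level at which $\mathcal{R}$, $\varphi$, $\psi$ live.
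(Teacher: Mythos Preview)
The paper does not actually supply a proof of the Leibniz Principle: Theorem~\ref{limit} is stated without proof and used as a black-box tool, with the foundational material (including the consistency of the axioms $(\Lambda\text{-}1)$--$(\Lambda\text{-}3)$) deferred to the references \cite{ultra}, \cite{belu2013}. So there is no ``paper's own proof'' to compare against; your proposal is filling in something the authors deliberately omitted.

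As for the proposal itself, the strategy is the standard one and is essentially correct: identify $\mathcal{R}$ with its graph, reduce to the commutation of the $\Lambda$-limit with Kuratowski pairs, and in turn reduce that to the two-element case $\lim_{\lambda}\{\varphi(\lambda),\psi(\lambda)\}=\{\lim\varphi,\lim\psi\}$. You have also correctly located the only nontrivial point, namely that a selector $\eta(\lambda)\in\{\varphi(\lambda),\psi(\lambda)\}$ must have $\Lambda$-limit equal to one of the two limits, and that this is driven by the idempotent $\lim\chi_{A}\in\{0,1\}$ obtained from $(\Lambda\text{-}3)$. One point you should make more explicit is how the induction on $n$ actually runs: the multiplicativity argument with $\chi_{A}$ lives only at level $0$, so what you really need as inductive hypothesis is the stronger statement ``if two nets into $\mathbb{U}_{n-1}$ agree on a set $A$ with $\lim\chi_{A}=1$, then their $\Lambda$-limits coincide.'' This is what lets you replace a selector for $\eta$ by a selector for $\varphi$ on the complement of $A$ without changing the limit. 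Once that lemma is carried through the induction, your argument closes; as written, the phrase ``the induction on $n$ ensures that the same argument applies'' glosses over exactly this step.
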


When $\mathcal{R}$ is $\in $ or $\mathcal{=}$ we will not use the symbol $%
\ast $ to denote their extensions, since their meaning is unaltered in
universes constructed over $\mathbb{R}^{\ast }.$ To give an example of how
Leibniz Principle can be used to prove facts about internal entities, let us
prove that if $K\subseteq \mathbb{R}$ is a compact set and $(f_{\lambda })$
is a net of continuous functions then $f=\underset{\lambda \uparrow \Lambda }%
{\lim }f_{\lambda }$ has a maximum on $K^{\ast }$. For every $\lambda $ let $%
\xi _{\lambda }$ be the maximum value attained by $f_{\lambda }$ on $K$, and
let $x_{\lambda }\in K$ be such that $f_{\lambda }(x_{\lambda })=\xi
_{\lambda }.$ For every $\lambda ,$ for every $y_{\lambda }\in K$ we have
that $f_{\lambda }(y_{\lambda })\leq f_{\lambda }(x_{\lambda }).$ By Leibniz
Principle, if we pose 
\begin{equation*}
x=\lim_{\lambda \uparrow \Lambda }x_{\lambda }
\end{equation*}%
we have that%
\begin{equation*}
\forall y\in K\text{ \ }f(y)\leq f(x),
\end{equation*}

so $\xi =\lim_{\lambda \uparrow \Lambda }\xi _{\lambda }$is the maximum of $%
f $ on $K$ and it is attained on $x.$

\section{Ultrafunctions}

\subsection{Definition of Ultrafunctions}

Let $W\subset \mathfrak{F}\left( \mathbb{R}^{N},\mathbb{R}\right) \ $be a
function vector space such that $\mathcal{D}\subseteq W\subseteq L^{2}.$

\begin{definition}
\label{approxseq} We say that $(W_{\lambda })_{\lambda \in \mathfrak{L}}$ is
an \textbf{approximating net for }$W$ if

\begin{enumerate}
\item $W_{\lambda }$ is a finite dimensional vector subspace of $W$ for
every $\lambda \in \mathfrak{L}$;

\item $\lambda _{1}\subseteq \lambda _{2}\Rightarrow W_{\lambda
_{1}}\subseteq W_{\lambda _{2}}$;

\item \label{unioni} if $Z\subset W$ is a finite dimensional vector space$\ $%
then $\exists \lambda $ such that $Z\subseteq W_{\lambda }\ $(hence $%
W=\bigcup\limits_{\lambda \in \mathfrak{L}}W_{\lambda }$).
\end{enumerate}
\end{definition}

\begin{example}
Let 
\begin{equation*}
\left\{ e_{a}\right\} _{a\in \mathbb{R}}
\end{equation*}%
be a Hamel basis\footnote{%
We recall that $\left\{ e_{a}\right\} _{a\in \mathbb{R}}$ is a Hamel basis
for $W$ if $\left\{ e_{a}\right\} _{a\in \mathbb{R}}$ is a set of linearly
indipendent elements of $W$ and every element of $W$ can be (uniquely)
written has a finite sum (with coefficients in $\mathbb{R}$) of elements of $%
\left\{ e_{a}\right\} _{a\in \mathbb{R}}.$ Since a Hamel basis of $W$ has
the continuum cardinality we can use the points of $\mathbb{R}$ as indices
for this basis.
\par
{}} of $W.$ For every $\lambda \in \mathfrak{L}$ let%
\begin{equation*}
W_{\lambda }=Span\left\{ e_{a}\ |\ a\in \lambda \right\} .
\end{equation*}%
Then $(W_{\lambda })$ is an approximating net for $W.$
\end{example}

\begin{definition}
Let $(W_{\lambda })$ be an approximating net for $W$. We call \textbf{space
of ultrafunctions} \textbf{generated by }$(W,(W_{\lambda }))$ the $\Lambda $%
-limit 
\begin{equation*}
W_{\Lambda }:=\left\{ \lim_{\lambda \uparrow \Lambda }f_{\lambda }\ |\
f_{\lambda }\in W_{\lambda }\right\} .
\end{equation*}%
In this case we will also say that the space $W_{\Lambda \text{ }}$is based
on the space $W$.
\end{definition}

So a space of ultrafunctions based on $W$ depends on the choice of an
approximating net for $W$. Nevertheless, different spaces of ultrafunctions
based on $W$ have a lot of properties in common. In what follows, $%
W_{\Lambda }$ is any space of ultrafunctions based on $W$.

Since $W_{\Lambda }\subset \left[ L^{2}\right] ^{\ast },$ we can equip $%
W_{\Lambda }$ with the following inner product:%
\begin{equation*}
\left( u,v\right) =\int_{\Omega }^{\ast }u(x)\overline{v(x)}\ dx,
\end{equation*}%
where $\int^{\ast }$ is the natural extension of the Lebesgue integral
considered as a functional%
\begin{equation*}
\int :L^{1}\rightarrow {\mathbb{R}}.
\end{equation*}%
The norm of an ultrafunction will be given by 
\begin{equation*}
\left\Vert u\right\Vert =\left( \int^{\ast }|u(x)|^{2}\ dx\right) ^{\frac{1}{%
2}}.
\end{equation*}%
So, given any vector space of functions $W$, we have the following
properties:

\begin{enumerate}
\item the ultrafunctions in $W_{\Lambda }$ are $\Lambda $-limits of nets $%
(f_{\lambda })$ of functions, with $f_{\lambda }\in W_{\lambda }$ for every $%
\lambda ;$

\item the space of ultrafunctions $W_{\Lambda }$ is a vector space of
hyperfinite dimension, since it is a $\Lambda $-limit of a net of finite
dimensional vector spaces;

\item if we identify every function $f\in W$ with the ultrafunction $f^{\ast
}=\lim_{\lambda \uparrow \Lambda }f$, then $W\subset W_{\Lambda }$;

\item $W_{\Lambda }$ has a ${\mathbb{R}}^{\ast }$-valued scalar product.
\end{enumerate}

Hence the ultrafunctions are particular internal functions 
\begin{equation*}
u:\left( {\mathbb{R}^{N}}\right) ^{\ast }\rightarrow {\mathbb{R}^{\ast }.}
\end{equation*}

\begin{remark}
For every $f\in\mathfrak{F}\left( \mathbb{R}^{N},\mathbb{R}\right)$ and for every space
of ultrafunctions $W_{\Lambda}$ based on $W$ we have that $f^{\ast }\in
W_{\Lambda}$ if and only if $f\in W.$
\end{remark}

\begin{proof} Let $f\in W.$ Then, eventually, $f\in W_{\lambda }$ and
hence 
\begin{equation*}
f^{\ast }=\lim_{\lambda \uparrow \Lambda }f\in \lim_{\lambda \uparrow
\Lambda }\ W_{\lambda }=W_{\Lambda }.
\end{equation*}

Conversely, if $f\notin W$ then by the Theorem \ref{limit} it follows that $%
f^{\ast }\notin W^{\ast }$and, since $W_{\Lambda }\subset W^{\ast }$,  this
entails the thesis. 
\end{proof}

\subsection{The canonical ultrafunctions}

In this section we will introduce a space $V$ such that, given any approximating net $(V_{\lambda})$ of $V$, the space of ultrafunction $V_{\Lambda }$ generated by $(V,V_{\lambda })$ is adequate for many applications,
particularly to PDEs. The space $V$ will be called the canonical space.

Let us recall the following standard terminology: for every function $f\in
L_{loc}^{1}(\mathbb{R}^{N})$ we say that a point $x\in \mathbb{R}^{N}$ is a Lebesgue point
for $f$ if%
\begin{equation*}
f(x)=\lim_{r\rightarrow 0^{+}}\frac{1}{m(B_{r}(x))}\int_{B_{r}(x)}f(y)dy,
\end{equation*}%
where $m(B_{r}(x))$ is the Lebesgue measure of the ball $B_{r}(x);$ we
recall the very important Lebesgue differentiation theorem (see e.g. \cite%
{lebesgue}), that we will need in the following:

\begin{theorem}
\label{Lebesgue}If $f\in L_{loc}^{1}(\mathbb{R}^{N})$ then a.e. $x\in 
\mathbb{R}^{N}$ is a Lebesgue point for $f$.
\end{theorem}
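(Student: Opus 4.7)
The plan is to reduce to a classical argument based on the Hardy--Littlewood maximal function, following the standard real-variable approach. Since the statement is local, I would first localize: for each $R>0$, replace $f$ by $f\mathbf{1}_{B_{R+1}(0)}\in L^{1}(\mathbb{R}^{N})$ and show that a.e.\ $x\in B_{R}(0)$ is a Lebesgue point; letting $R\to\infty$ gives the result. Hence assume $f\in L^{1}(\mathbb{R}^{N})$.

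The key auxiliary object is the Hardy--Littlewood maximal function
\begin{equation*}
Mf(x)=\sup_{r>0}\frac{1}{m(B_{r}(x))}\int_{B_{r}(x)}|f(y)|\,dy.
\end{equation*}
The first technical step is the weak-type $(1,1)$ inequality
\begin{equation*}
m\bigl(\{x:Mf(x)>\alpha\}\bigr)\leq \frac{C_{N}}{\alpha}\,\|f\|_{L^{1}},\qquad \alpha>0,
\end{equation*}
proved by a Vitali-type covering argument: from any compact subset of $\{Mf>\alpha\}$ one extracts a finite disjoint subfamily of balls on which the integral of $|f|$ is large, then inflates each ball by a factor of $3$ (or $5$) to cover the original set, paying the factor $3^{N}$ (or $5^{N}$) in measure.

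Next, define
\begin{equation*}
\Omega f(x)=\limsup_{r\to 0^{+}}\left|\frac{1}{m(B_{r}(x))}\int_{B_{r}(x)}f(y)\,dy-f(x)\right|.
\end{equation*}
For a continuous compactly supported function $g$, uniform continuity gives $\Omega g\equiv 0$. For general $f\in L^{1}$ and any $\varepsilon>0$, density of $C_{c}(\mathbb{R}^{N})$ in $L^{1}$ yields a decomposition $f=g+h$ with $g\in C_{c}$ and $\|h\|_{L^{1}}<\varepsilon$. Then $\Omega f=\Omega h$, and the pointwise bound $\Omega h(x)\leq Mh(x)+|h(x)|$ together with the maximal inequality and Chebyshev's inequality applied to $h$ gives, for every $\alpha>0$,
\begin{equation*}
m\bigl(\{\Omega f>\alpha\}\bigr)\leq m\bigl(\{Mh>\alpha/2\}\bigr)+m\bigl(\{|h|>\alpha/2\}\bigr)\leq \frac{2(C_{N}+1)\varepsilon}{\alpha}.
\end{equation*}
Letting $\varepsilon\to 0$ shows $m(\{\Omega f>\alpha\})=0$ for each $\alpha>0$, and taking $\alpha$ along a sequence tending to $0$ gives $\Omega f=0$ a.e., which is exactly the Lebesgue differentiation property.

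The main obstacle is the Vitali covering argument underlying the maximal inequality; everything else is soft (density of continuous functions and an elementary splitting). Note that this proof is entirely classical and does not involve the $\Lambda$-theory developed earlier, because the statement concerns standard Lebesgue measure on $\mathbb{R}^{N}$.
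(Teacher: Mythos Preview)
Your sketch is correct and follows the standard real-variable argument via the Hardy--Littlewood maximal inequality and density of $C_{c}(\mathbb{R}^{N})$ in $L^{1}$; there is no gap. However, there is nothing to compare against: the paper does not prove this theorem at all. It is stated as a classical result and attributed to the literature (the reference to Lebesgue's original paper), and is invoked only as a tool in the proof of the subsequent lemma on the Lebesgue operator $f\mapsto\overline{f}$. So your proposal supplies a proof where the paper deliberately omits one.
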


We fix once for ever an infinitesimal number $\eta \neq 0.$ Given a function 
$f\in L_{loc}^{1}(\mathbb{R}^{N}),$ we set%
\begin{equation}
\overline{f}(x)=st\left( \frac{1}{m(B_{\eta }(x))}\int_{B_{\eta
}(x)}f(y)dy\right) ,  \label{due+}
\end{equation}%
where $m(B_{\eta }(x))$ is the Lebesgue measure of the ball $B_{\eta }(x).$
We will refer to the operator $f\mapsto \overline{f}$ as the Lebesgue
operator.

\begin{lemma}
\label{a}The Lebesgue operator $f\mapsto \overline{f}$ satisfies the
following properties:

\begin{enumerate}
\item \label{a1}if $x$ is a Lebesgue point for $f$ then $\overline{f}%
(x)=f(x);$

\item \label{a2}$f(x)=\overline{f}(x)\ $a.e.$;$

\item \label{a3}if $f(x)=g(x)$ a.e. then $\overline{f}(x)=\overline{g}(x);$

\item \label{a4}$\overline{\overline{f}}(x)=\overline{f}(x).$
\end{enumerate}
\end{lemma}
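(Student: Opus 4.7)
The plan is to handle the four items in order. Item \ref{a1} carries the main nonstandard content; items \ref{a2} and \ref{a4} follow from it together with Theorem \ref{Lebesgue}, while item \ref{a3} is a separate direct application of Leibniz principle. Throughout, let $F(r,x) := \frac{1}{m(B_r(x))}\int_{B_r(x)} f(y)\,dy$ for $r > 0$, so that $\overline{f}(x) = st(F^{\ast}(\eta,x))$ by definition.

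For item \ref{a1}, I would transfer the ordinary Lebesgue-point limit to the nonstandard setting via Leibniz principle. Fix a Lebesgue point $x$ and a standard $\varepsilon > 0$. By the definition of Lebesgue point, there exists a standard $\delta > 0$ such that $|F(r,x) - f(x)| < \varepsilon$ for every $r \in (0,\delta)$. Applying Theorem \ref{limit} yields $|F^{\ast}(r,x) - f(x)| < \varepsilon$ for every $r \in (0,\delta)^{\ast}$. Since $\eta$ is a positive infinitesimal, $0 < \eta < \delta$ holds in $\mathbb{R}^{\ast}$, so $|F^{\ast}(\eta,x) - f(x)| < \varepsilon$. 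As $\varepsilon$ was an arbitrary standard positive real, $F^{\ast}(\eta,x) \sim f(x)$, and taking standard parts yields $\overline{f}(x) = f(x)$.

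Items \ref{a2}, \ref{a3}, \ref{a4} are then short. Item \ref{a2} combines item \ref{a1} with Theorem \ref{Lebesgue}, which ensures that almost every $x$ is a Lebesgue point of $f$. For item \ref{a3}, the standard fact that $f = g$ a.e. implies $\int_{B_r(x)} f(y)\,dy = \int_{B_r(x)} g(y)\,dy$ for every $x \in \mathbb{R}^N$ and every $r > 0$; Leibniz principle extends this equality to all positive $r$ in $\mathbb{R}^{\ast}$, including $r = \eta$. Dividing by the common nonzero quantity $m(B_\eta(x))$ and taking standard parts then gives $\overline{f}(x) = \overline{g}(x)$ for every $x$. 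Finally, item \ref{a4} follows by applying item \ref{a3} to $f$ and $\overline{f}$, which coincide almost everywhere by item \ref{a2}.

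The chief obstacle is item \ref{a1}: one must bridge the ordinary $\varepsilon$-$\delta$ limit defining a Lebesgue point to the infinitesimal condition implicit in the definition of $\overline{f}$. The crux of the Leibniz-principle argument is that, for each fixed standard $\varepsilon$, the witnessing $\delta$ can be chosen standard, so that the infinitesimal $\eta$ automatically falls in $(0,\delta)^{\ast}$; this is what converts the quantifier over standard $\varepsilon$ into an infinite-closeness statement. Once this passage is made precise, the remaining items are essentially formal.
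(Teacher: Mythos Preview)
Your proof is correct and follows essentially the same route as the paper's: item \ref{a1} via the infinitesimal closeness of the average at radius $\eta$ to $f(x)$, item \ref{a2} from Theorem \ref{Lebesgue} plus \ref{a1}, item \ref{a3} from equality of integrals over balls, and item \ref{a4} by applying \ref{a3} to $f$ and $\overline{f}$. The only difference is presentational: where the paper simply asserts $\frac{1}{m(B_\eta(x))}\int_{B_\eta(x)} f(y)\,dy \sim f(x)$ at a Lebesgue point, you spell out the standard $\varepsilon$--$\delta$/Leibniz-principle justification, which is the right expansion of that step.
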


\begin{proof} (\ref{a1}) If $x$ is a Lebesgue point for $f$ then

\begin{equation*}
\frac{1}{m(B_{\eta }(x))}\int_{B_{\eta }(x)}f(y)dy\sim f(x),
\end{equation*}

so $\overline{f}(x)=f(x).$

(\ref{a2}) This follows immediatly by Theorem \ref{Lebesgue} and (\ref{a1}).

(\ref{a3}) Let $x\in \mathbb{R}^{N}.$ Since $f(x)=g(x)$ a.e., we obtain that $%
\int_{B_{\eta }(x)}f(y)dy=\int_{B_{\eta }(x)}g(y)dy,$ so 
\begin{eqnarray*}
\overline{f}(x) &=&st\left( \frac{1}{m(B_{\eta }(x))}\int_{B_{\eta
}(x)}f(y)dy\right) \\
&=&st\left( \frac{1}{m(B_{\eta }(x))}\int_{B_{\eta }(x)}g(y)dy\right) =%
\overline{g}(x).
\end{eqnarray*}

(\ref{a4}) This follows easily by (\ref{a2}) and (\ref{a3}). 
\end{proof}

\begin{example}
If $E=\Omega $ is an open set with smooth boundary, we have that%
\begin{equation}
\overline{\chi _{\Omega }}(x)=\left\{ 
\begin{array}{cc}
1 & \text{if\ }\ x\in \Omega ; \\ 
0 & \text{if\ }\ x\notin \Omega ; \\ 
\frac{1}{2} & \text{if\ }\ x=\partial \Omega .%
\end{array}%
\right.  \label{chi}
\end{equation}
\end{example}

We recall the following definition:

\begin{definition}
Let $f\in L^{1}(\mathbb{R}^{N}).$ $f$ is a bounded variation function (BV for short) if
there exists a finite vector Radon measure $\func{grad}f$ such that, for
every $g\in \mathcal{C}_{c}^{1}(\mathbb{R}^{N},\mathbb{R}^{N}),$ we have%
\begin{equation*}
\int f(x)\func{div}g(x)dx=-\left\langle \func{grad}f,g\right\rangle .
\end{equation*}
\end{definition}

Let us note that $\func{grad}f\ $is the gradient of $f(x)$ in the sense of
distribution. Thus, the above definition can be rephrased as follows: $f$ is a bounded variation function if $\func{grad}f\in \mathfrak{M}$.

We now set%
\begin{equation*}
V=\left\{ u\in BV_{c}(\mathbb{R}^{N})\cap L^{\infty }(\mathbb{R}^{N})\ |\ \overline{u}(x)=u(x)\right\} ,
\end{equation*}%
where $BV_{c}$ denotes the set of function of bounded variation with compact
support. So, by Lemma \ref{a},(\ref{a4}), we have that if $u\in BV_{c}(\mathbb{R}^{N})\cap
L^{\infty }(\mathbb{R}^{N})$ then $\overline{u}\in V.$ Let us observe that the condition $%
\overline{u}(x)=u(x)$ entails that the essential supremum of any $u\in V$
coincides with the supremum of $u,$ namely $\left\Vert u\right\Vert
_{L^{\infty }}=\sup \left\vert u(x)\right\vert $.

We list some properties of $V$ that will be useful in the following:

\begin{theorem}
\label{diana}The following properties hold:

\begin{enumerate}
\item \label{b1}$V\ $is a vector space and $\mathcal{C}_{c}^{1}(\mathbb{R}^{N})\subset
V\subset L^{p}(\mathbb{R}^{N})$ for every $p\in \left[ 1,+\infty \right] ;$

\item \label{b2}if $u\in V$ then the weak partial derivative $\partial _{j}u=%
\frac{\partial u}{\partial x_{j}}$ is a Radon finite signed measure;

\item \label{b3}if $u,v\in V$ then $u=v\ $a.e. if and only if $u=v$;

\item \label{b4}the $L^{2}$ norm is a norm for $V$ (and not a pseudonorm).
\end{enumerate}
\end{theorem}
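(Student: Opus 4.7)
The plan is to handle the four items essentially independently, with most work going into (\ref{b1}) and each of the others reducing quickly to Lemma~\ref{a} or the definition of $BV$.

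For (\ref{b1}), I would first check that $V$ is closed under linear combinations. The only nontrivial part is preservation of the identity $\overline{u}=u$, since $BV_c\cap L^\infty$ is clearly a vector space. The Lebesgue operator is linear: for $u,v\in L^\infty$ the inner averages $\frac{1}{m(B_\eta(x))}\int_{B_\eta(x)} u\,dy$ and the analogous one for $v$ are bounded by $\|u\|_{L^\infty}$ and $\|v\|_{L^\infty}$, hence finite hyperreals, so the standard part distributes over real linear combinations and $\overline{\alpha u+\beta v}=\alpha\overline{u}+\beta\overline{v}=\alpha u+\beta v$. For the inclusion $\mathcal{C}_c^1\subset V$: such a $u$ is continuous, so every $x$ is a Lebesgue point and hence $\overline{u}(x)=u(x)$ by Lemma~\ref{a}(\ref{a1}); membership in $BV_c\cap L^\infty$ is immediate because $\nabla u\,dx$ is a finite vector Radon measure with compact support. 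For $V\subset L^p(\mathbb{R}^N)$ with $p\in[1,\infty]$, every $u\in V$ is bounded with compact support, so $\int|u|^p\,dx\le \|u\|_{L^\infty}^p\, m(\mathrm{supp}\,u)<\infty$.

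Property (\ref{b2}) is a direct unwrapping: by definition $u\in BV$ means $\mathrm{grad}\,u$ is a finite vector Radon measure on $\mathbb{R}^N$, and $\partial_j u$ is the $j$-th component of this vector measure, which is therefore a finite signed Radon measure. For (\ref{b3}), the nontrivial direction is that $u=v$ a.e.\ implies $u=v$ pointwise; but Lemma~\ref{a}(\ref{a3}) applied to $u-v$ (or equivalently to $u$ and $v$) gives $\overline{u}(x)=\overline{v}(x)$ for every $x$, and since $u,v\in V$ we have $u=\overline{u}=\overline{v}=v$ identically. Finally, (\ref{b4}) follows from (\ref{b3}): if $\|u\|_{L^2}=0$ then $u=0$ a.e., and since the zero function obviously belongs to $V$ (it satisfies $\overline{0}=0$), (\ref{b3}) upgrades this to $u\equiv 0$.

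The only genuine subtlety is the linearity step in (\ref{b1}): one needs $L^\infty$ control on $u$ and $v$ to keep the averages finite and thus legitimately pull the standard part through sums and scalar multiples. Everything else is a routine appeal to Lemma~\ref{a}, to the $BV$ definition, or to the fact that compactly supported $L^\infty$ functions lie in every $L^p$.
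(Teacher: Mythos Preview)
Your proof is correct and follows essentially the same approach as the paper's: parts (\ref{b2})--(\ref{b4}) are argued identically via the $BV$ definition and Lemma~\ref{a}(\ref{a3}), while for (\ref{b1}) the paper simply says it ``follows easily by the definitions and the fact that $BV_{c},\ L^{\infty}$ are vector spaces,'' whereas you supply the extra detail (linearity of the Lebesgue operator on $L^{\infty}$ via finiteness of the averages, and the explicit inclusions) that the paper omits.
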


\begin{proof} (\ref{b1}) This follows easily by the definitions and the fact
that $BV_{c},\ L^{\infty }$ are vector spaces.

(\ref{b2}) This holds since $V\subset BV_{c}(\mathbb{R}^{N}).$

(\ref{b3}) Let $u,v\in V$. If $u=v$ then clearly $u=v$ a.e.; conversely, let
us suppose that $u=v$ a.e.; by Theorem \ref{a}, (\ref{a3}) we deduce that $%
\overline{u}(x)=\overline{v}(x).$ But $u,v\in V,$ so $u(x)=\overline{u}(x)=%
\overline{v}(x)=v(x).$

(\ref{b4}) Let $u\in V$ be such that $\left\Vert u\right\Vert _{L^{2}}=0.$
Then $u=0$ a.e. and, since $0\in V,$ by (\ref{b3}) we deduce that $u=0.$ \end{proof}

\begin{remark}
By Theorem \ref{diana}, (\ref{b4}) it follows that, for every $f\in V,$ $%
\partial _{j}f\in V^{\prime }$ where $V^{\prime }$ denotes the (algebraic)
dual of $V$. This relation is very important to define the ultrafunction
derivative (see section \ref{D}). In fact if $f,g\in V,$ then%
\begin{equation*}
\left\langle f,\partial _{j}g\right\rangle
\end{equation*}%
is well defined, since $\partial _{j}g$ is a finite Randon measure and $f$
is a bounded Borel-measurable function and hence $f\in \mathfrak{M}^{\prime
} $.
\end{remark}

\begin{definition}
A bounded Caccioppoli set $E\subseteq\mathbb{R}^{N}$ is a Borel set such that $\chi _{E}\in
BV_{c}(\mathbb{R}^{N}),\ $namely such that $\func{grad}(\chi _{E})$ (the distributional
gradient of the characteristic function of $E$) is a finite radon measure.
The number%
\begin{equation*}
\left\langle 1,\func{grad}(\chi _{E})\right\rangle
\end{equation*}%
is called Caccioppoli perimeter of $E.$
\end{definition}

We set%
\begin{equation*}
\mathfrak{B}=\left\{ \Omega \ \text{is a bounded, open, Caccioppoli set in} \ \mathbb{R}^{N} \right\} .
\end{equation*}

Let us note $\mathfrak{B}$ is closed under unions and intersections and
that, by definition, if $\Omega \in \mathfrak{B}$ then $\overline{\chi
_{\Omega }}\in V.$

\begin{lemma}
\label{geppetto}If $f,g\in V$ and $\Omega ,\Theta \in \mathfrak{B,}$ then, 
\begin{equation*}
\overline{f\chi _{\Omega }},\ \overline{g\chi _{\Theta }}\in V;
\end{equation*}%
moreover, we have that 
\begin{equation*}
\int \overline{f\chi _{\Omega }}\ \overline{g\chi _{\Theta }}\
dx=\int_{\Omega \cap \Theta }f(x)g(x)dx.
\end{equation*}
\end{lemma}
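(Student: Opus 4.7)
The plan is to split the claim into two parts: (i) the membership $\overline{f\chi_\Omega},\ \overline{g\chi_\Theta}\in V$, and (ii) the integral identity, and to reduce each to facts already established. Part (i) hinges on the observation made just before the lemma (a consequence of Lemma \ref{a}, (\ref{a4})) that $u\in BV_c(\mathbb{R}^N)\cap L^\infty(\mathbb{R}^N)$ implies $\overline{u}\in V$; it therefore suffices to check that $f\chi_\Omega$ (and, symmetrically, $g\chi_\Theta$) lies in $BV_c\cap L^\infty$. Part (ii) will follow almost at once from Lemma \ref{a}, (\ref{a2}), which guarantees that the Lebesgue operator modifies its argument only on a null set.

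First I would verify $f\chi_\Omega\in BV_c\cap L^\infty$. Boundedness is immediate since $f\in V\subset L^\infty$ and $\chi_\Omega\in L^\infty$. The support of $f\chi_\Omega$ lies in $\overline{\Omega}\cap\mathrm{supp}(f)$, which is compact because $f\in BV_c$ (Theorem \ref{diana}, (\ref{b1})) and $\Omega$ is bounded. Bounded variation follows from the classical product rule in $BV$: if $u,v\in BV\cap L^\infty$ then $uv\in BV$, since the Volpert product formula exhibits $\partial_j(uv)$ as a finite Radon measure built from $\partial_j u$, $\partial_j v$ and the bounded precise representatives of $u$ and $v$. Applying this with $u=f$ and $v=\chi_\Omega$ gives $f\chi_\Omega\in BV_c\cap L^\infty$, whence $\overline{f\chi_\Omega}\in V$; the same argument yields $\overline{g\chi_\Theta}\in V$.

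For the integral identity, Lemma \ref{a}, (\ref{a2}) gives $\overline{f\chi_\Omega}(x)=f(x)\chi_\Omega(x)$ and $\overline{g\chi_\Theta}(x)=g(x)\chi_\Theta(x)$ almost everywhere, so
\[
\int \overline{f\chi_\Omega}\,\overline{g\chi_\Theta}\,dx \;=\; \int f(x)g(x)\chi_\Omega(x)\chi_\Theta(x)\,dx \;=\; \int_{\Omega\cap\Theta} f(x)g(x)\,dx,
\]
using only the invariance of the Lebesgue integral under null-set modifications and the identity $\chi_\Omega\chi_\Theta=\chi_{\Omega\cap\Theta}$.

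The main obstacle is the $BV$ product rule applied when one factor is the characteristic function of a Caccioppoli set: because $\chi_\Omega$ jumps across the reduced boundary $\partial^*\Omega$, the smooth Leibniz rule does not apply directly, and one must rely on Volpert's product formula (or equivalently on a mollification-and-trace argument) to conclude that $f\chi_\Omega\in BV$. Once this standard fact is granted, everything else reduces to routine bookkeeping with the properties of the Lebesgue operator already at hand.
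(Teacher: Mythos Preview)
Your proof is correct and follows essentially the same route as the paper's: both arguments use that $BV_c\cap L^\infty$ is closed under products to place $f\chi_\Omega$ (and $g\chi_\Theta$) in $BV_c\cap L^\infty$, then invoke the observation preceding the lemma (from Lemma~\ref{a},\,(\ref{a4})) to get $\overline{f\chi_\Omega}\in V$, and finally use that the Lebesgue operator changes its argument only on a null set (Lemma~\ref{a},\,(\ref{a2})) for the integral identity. The only difference is cosmetic: the paper simply asserts that $BV_c\cap L^\infty$ is an algebra, whereas you spell out the underlying reason via the Volpert product rule.
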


\begin{proof} $BV_{c}(\mathbb{R}^{N})\cap L^{\infty }(\mathbb{R}^{N})$ is an algebra, so $f\chi _{\Omega }$%
,$g\chi _{\Theta }\in BV_{c}(\mathbb{R}^{N})\cap L^{\infty }(\mathbb{R}^{N})$; by Lemma \ref{a},(\ref{a3}), $%
\overline{f\chi _{\Omega }},\ \overline{g\chi _{\Theta }}\in BV_{c}(\mathbb{R}^{N})\cap
L^{\infty }(\mathbb{R}^{N})$ and by Lemma \ref{a},(\ref{a4}), $\overline{f\chi _{\Omega }},\ 
\overline{g\chi _{\Theta }}\in V.$ Using again Lemma \ref{a},(\ref{a3}), we
have that%
\begin{equation*}
\int \overline{f\chi _{\Omega }}\ \overline{g\chi _{\Theta }}=\int f\chi
_{\Omega }g\chi _{\Theta }=\int_{\Omega \cap \Theta }f\ g.\text{ \ }
\end{equation*}
\end{proof}

Now let $(V_{\lambda })$ be an approximating net for $V.$

\begin{definition}
The space of ultrafunctions $V_{\Lambda }$ generated by $(V,(V_{\lambda }))$
is called the \textbf{canonical space} of ultrafunctions, and its elements
are called \textbf{canonical ultrafunctions}.
\end{definition}

We will denote by $\mathfrak{B}_{\Lambda}$ the set 

\begin{equation*} \mathfrak{B}_{\Lambda}=\{\Omega\in\mathfrak{B}^{\ast}\mid \overline{\chi_{\Omega}}\in V_{\Lambda}\}. \end{equation*}

Let us note that, by construction, $\mathfrak{B}^{\sigma}\subseteq\mathfrak{B}_{\Lambda}\subseteq V_{\Lambda}$.

The canonical space of ultrafunctions has three important properties for
applications:

\begin{enumerate}
\item \label{i} $V_{\Lambda }\subseteq \left( L^{2}\right) ^{\ast };$

\item \label{ii} since $V_{\Lambda }\subseteq V^{\ast }$ we have that%
\begin{equation}
u\in V_{\Lambda }\Rightarrow \partial _{j}u\in V_{\Lambda }^{\prime },
\label{bona}
\end{equation}%
where $V_{\Lambda }^{\prime }$ denotes the dual of $V_{\Lambda }$;

\item \label{iii} if $\Omega $ is a bounded open set with smooth boundary,
then $\overline{\chi _{\Omega ^{\ast }}}\in V_{\Lambda }.$
\end{enumerate}

Property \ref{i} is in common with (almost) all the space of ultrafunctions
that we considered in our previous works (see \cite{ultra}, \cite{belu2012}, 
\cite{belu2013}, \cite{milano}, \cite{algebra}); it is important since it
gives a duality which corresponds to the scalar product in $L^{2}$. This
fact allows to relate the generalized solutions in the sense of
ultrafunctions with the weak solutions in the sense of distributions.

Property \ref{ii} follows by the construction of $V_{\Lambda }$, since $%
V_{\Lambda }\subseteq V^{\ast }$. This relation is used to define the
ultrafunction derivative (see section \ref{D}). There are other spaces such
as $C_{c}^{1}$, $H^{1}$ or the fractional Sobolev space $H^{1/2}$ which
satisfy (\ref{bona}); the fractional Sobolev space $H^{1/2}\ $is the optimal
Sobolev space with respect to this request (in the sense that it is the
biggest space). However our choice of the space is due to the request \ref%
{iii}. This request seems necessary to get a definite integral which
satifies the properties which allows to prove Gauss' divegence theorem (see
section \ref{DI}) and hence to prove some conservation laws. Also this
property implies that the extensions of local operators\footnote{%
By local operator we mean any operator $F:V\rightarrow V$
such that supp$(F(f))\subseteq $ supp$(f)$ $\forall f\in V.$} are
local.

Let us note that there are other spaces which satisfy \ref{i}, \ref{ii}, \ref%
{iii}, e.g the space generated by functions of the form 
\begin{equation*}
u(x)=\overline{f(x)\chi _{\Omega }(x)}
\end{equation*}%
with $f\in \mathcal{C}^{2}(\mathbb{R}^{N}).$ Clearly this space is included in $V$ and so it
seems more convenient to take $V.$ In any case, we think that $V$ is a good
framework for our work.

\subsection{Canonical extension of functions and measures}

We denote by $\mathfrak{M}$ the vector space of\textit{\ (signed) Radon
measure }on $\mathbb{R}^{N}$.

We start by defining a map%
\begin{equation*}
P_{\Lambda }:\mathfrak{M}^{\ast }\rightarrow V_{\Lambda }
\end{equation*}%
which will be very useful in the extension of functions. As usual we will
suppose that $L_{loc}^{1}(\mathbb{R}^{N})\subset \mathfrak{M}$ identifying every locally
integrable function $f$ with the measure $f(x)dx.$

\begin{definition}
\label{tilde} If $\mu \in \mathfrak{M}^{\ast },$ $\widetilde{\mu }%
=P_{\Lambda }\mu $ denotes the unique ultrafunction such that 
\begin{equation*}
\forall v\in V_{\Lambda },\ \int \widetilde{\mu }(x)v(x)dx=\left\langle
v,\mu \right\rangle .
\end{equation*}%
In particular, if $u\in \left[ L_{loc}^{1}(\mathbb{R}^{N})\right] ^{\ast },$ $\widetilde{u}%
=P_{\Lambda }u$ denotes the unique ultrafunction such that%
\begin{equation*}
\forall v\in V_{\Lambda },\ \int \widetilde{u}(x)v(x)dx=\int u(x)v(x)dx.
\end{equation*}
\end{definition}

Let us note that this definition is well posed since every ultrafunction $%
v\in V_{\Lambda }$ is $\mu $-integrable for every $\mu \in \mathfrak{M}%
^{\ast }$ and hence $v\in \left( \mathfrak{M}^{\ast }\right) ^{\prime }.$

\begin{remark}
\label{rina} Notice that, if $u\in \left[ L^{2}\left( \mathbb{R}^{N}\right) %
\right] ^{\ast },$ then $P_{\Lambda }(u)$ is the orthogonal projection of $u$ on $V_{\Lambda}$.
\end{remark}

In particular, if $f\in L_{loc}^{1}(\mathbb{R}^{N}),$ the function $\widetilde{f^{\ast }}$
is well defined. From now on we will simplify the notation just writing $%
\widetilde{f}.$

\begin{example}
\label{esempio1} Take $\frac{1}{|x|},\ x\in \mathbb{R}^{N};\ $if $N\geq 2$,
then $\frac{1}{|x|}\in L_{loc}^{1}(\mathbb{R}^{N}),$ and it is easy to check that the value
of $\widetilde{\frac{1}{|x|}}$ for $x=0$ is an infinite number. Notice that
the ultrafunction $\widetilde{\frac{1}{|x|}}$ is different from $\left( 
\frac{1}{|x|}\right) ^{\ast }$ since the latter is not defined for $x=0$.
Moreover they differ "near infinity" since $\widetilde{\frac{1}{|x|}}$ has
its support is contained in an interval (of infinite lenght).
\end{example}

\begin{example}
\label{esempio2}If $E$ is a bounded borel set, then%
\begin{equation*}
\widetilde{\chi _{E}^{\ast}}=\left( \overline{\chi _{E}}\right) ^{\ast }.
\end{equation*}
\end{example}

\section{Generalization of some basic notions of calculus}

\subsection{Derivative\label{D}}

As we already mentioned, the crucial property that we will use to define the
ultrafunctions derivative is that the weak derivative of a $BV$ function is
a Radon measure. This allows to introduce the following definition:

\begin{definition}
\label{deri}Given an ultrafunction $u\in V_{\Lambda },$ we define the 
\textbf{ultrafunction derivative} as follows:%
\begin{equation*}
D_{j}u=P_{\Lambda }(\partial _{j}u)=\widetilde{\partial _{j}u,}
\end{equation*}%
where $P_{\Lambda }$ is defined by Definition \ref{tilde}.
\end{definition}

Let us note that, by definition, the ultrafunction derivative and the classical derivative of an ultrafunction $u$ coincide whenever $\partial_{j} u$ is an ultrafunction. The above definition makes sense since $\partial _{j}u\in \mathfrak{M}^{\ast
}.$ More explicitly if $u\in V_{\Lambda }$ then, $\forall v\in V_{\Lambda },$%
\begin{equation*}
\int D_{j}uv\ dx=\left\langle v,\partial _{j}u\right\rangle .
\end{equation*}%
The right hand side makes sense since $|v|$ is bounded and $\partial _{j}u$
is a finite measure.

\begin{theorem}
The ultrafunction derivative is antisymmetric; namely, for every ultrafunctions $u,v\in
V_{\Lambda }$ we have that%
\begin{equation}
\int D_{j}u(x)v(x)dx=-\int u(x)D_{j}v(x)dx.  \label{cinzia}
\end{equation}
\end{theorem}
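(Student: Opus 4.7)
The plan is to unpack the definition of $D_j$ so that the claim reduces to a standard integration-by-parts identity for BV functions, and then transfer it from $V$ to $V_\Lambda$ via the Leibniz principle (Theorem \ref{limit}).

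First, by Definition \ref{deri} one has $D_j u = \widetilde{\partial_j u} = P_\Lambda(\partial_j u)$, and the defining property in Definition \ref{tilde} yields, for every $u,v \in V_\Lambda$,
\[
\int D_j u(x)\, v(x)\, dx \;=\; \langle v, \partial_j u\rangle, \qquad \int u(x)\, D_j v(x)\, dx \;=\; \langle u, \partial_j v\rangle,
\]
where the right-hand sides denote the natural extensions of the duality pairing between a bounded Borel-measurable function and a finite Radon measure. Hence (\ref{cinzia}) is equivalent to
\[
\langle v, \partial_j u\rangle + \langle u, \partial_j v\rangle = 0 \qquad \forall\, u,v\in V_\Lambda.
\]
Since $V_\Lambda \subseteq V^{\ast}$, by Leibniz principle it suffices to prove the same identity at the standard level, namely $\langle v, \partial_j u\rangle + \langle u, \partial_j v\rangle = 0$ for all $u,v \in V$.

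For this standard identity I would invoke the BV product rule. Given $u,v \in V \subset BV_c(\mathbb{R}^N) \cap L^\infty(\mathbb{R}^N)$, the product $uv$ is again in $BV_c \cap L^\infty$ (as in the proof of Lemma \ref{geppetto}), and
\[
\partial_j(uv) \;=\; u\, \partial_j v + v\, \partial_j u
\]
as finite Radon measures. The condition $\overline{u}=u$ built into $V$ is precisely what is needed here: it forces $u$ and $v$ to be their own precise (Lebesgue) representatives, which is the correct choice of pointwise values to evaluate against the jump parts of $\partial_j v$ and $\partial_j u$ (compare the value $1/2$ on $\partial\Omega$ in (\ref{chi})). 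Picking $\phi \in C_c^1(\mathbb{R}^N)$ with $\phi \equiv 1$ on a neighbourhood of $\mathrm{supp}(u)\cup\mathrm{supp}(v)$, the definition of $BV$ gives $\langle \phi, \partial_j(uv)\rangle = -\int uv\, \partial_j\phi\, dx = 0$ (because $\partial_j\phi$ vanishes on $\mathrm{supp}(uv)$), while on the right side $\phi$ can be dropped from each pairing, its support containing $\mathrm{supp}(\partial_j u)\cup\mathrm{supp}(\partial_j v)$. This yields the required identity, which Leibniz principle transfers to $V_\Lambda$.

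The main obstacle is the BV product rule, the only step that uses nontrivial BV theory. A more self-contained alternative that avoids quoting Volpert's chain rule is to mollify: set $u_\varepsilon = \rho_\varepsilon\ast u$ and $v_\varepsilon = \rho_\varepsilon\ast v$ for a symmetric mollifier $\rho_\varepsilon$; classical integration by parts gives $\int v_\varepsilon\, \partial_j u_\varepsilon\, dx + \int u_\varepsilon\, \partial_j v_\varepsilon\, dx = 0$, and a Fubini rewrite of the first term as $\int (\rho_\varepsilon\ast\rho_\varepsilon\ast v)(y)\, d(\partial_j u)(y)$ lets one pass to the limit, since the regularizations converge to $\overline{v} = v$ at $|\partial_j u|$-almost every point (dominated convergence applies thanks to $v \in L^\infty$ and $|\partial_j u|(\mathbb{R}^N) < \infty$). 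Either route establishes the standard identity, after which Leibniz principle delivers (\ref{cinzia}).
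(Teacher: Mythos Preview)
Your argument is correct and follows essentially the same route as the paper: both reduce the identity to $\langle v,\partial_j u\rangle+\langle u,\partial_j v\rangle=\langle 1,\partial_j(uv)\rangle=0$, using that $BV_c\cap L^\infty$ is an algebra and that $uv$ has compact support. The only differences are organisational: you establish the identity for $u,v\in V$ and transfer to $V_\Lambda\subset V^\ast$ via Leibniz principle, while the paper works directly with internal objects; and you localise with a smooth cutoff $\phi\in C_c^1$, whereas the paper tests against $\overline{\chi_\Omega}$ for some $\Omega\in\mathfrak{B}_\Lambda$ containing $\mathrm{supp}(uv)$. Your explicit remark that the condition $\overline{u}=u$ in the definition of $V$ guarantees the product rule $\partial_j(uv)=u\,\partial_j v+v\,\partial_j u$ holds with the given pointwise values is a point the paper leaves implicit.
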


\begin{proof} Let us observe that $BV_{c}(\mathbb{R}^{N})\cap L^{\infty }(\mathbb{R}^{N})$ is an algebra,
so $u\cdot v\in (BV_{c}(\mathbb{R}^{N})\cap L^{\infty }(\mathbb{R}^{N}))^{\ast}.$ Let $\Omega \in \mathfrak{B}_{\Lambda}$
contain the support of $u\cdot v.$ Then 
\begin{equation*}
0=\left\langle uv,\partial _{j}\overline{\chi _{\Omega }}\right\rangle
=\left\langle \overline{\chi _{\Omega }},\partial _{j}\left( uv\right)
\right\rangle =\left\langle 1,\partial _{j}\left( uv\right) \right\rangle
\end{equation*}%
By definition,%
\begin{equation*}
\int D_{j}u(x)v(x)dx+\int u(x)D_{j}v(x)dx=\left\langle u,\partial
_{j}v\right\rangle +\left\langle v,\partial _{j}u\right\rangle
\end{equation*}%
and since $u$,$v\in V^{\ast },$ then $u\partial _{j}v$ and $v\partial _{j}u$ are
Radon measures, so we have%
\begin{equation*}
\left\langle u,\partial _{j}v\right\rangle +\left\langle v,\partial
_{j}u\right\rangle =\left\langle 1,u\partial _{j}v\right\rangle
+\left\langle 1,v\partial _{j}u\right\rangle
\end{equation*}

Then%
\begin{equation*}
\int D_{j}u(x)v(x)dx+\int u(x)D_{j}v(x)dx=\left\langle 1,u\partial
_{j}v\right\rangle +\left\langle 1,v\partial _{j}u\right\rangle
=\left\langle 1,\partial _{j}\left( uv\right) \right\rangle =0
\end{equation*}%
hence we obtain the thesis. \end{proof}

\subsection{Gauss' divergence theorem\label{DI}}

\begin{definition}
\label{andromaca}If $u\in \left( L_{loc}^{1}(\mathbb{R}^{N})\right) ^{\ast }$ and $\Omega
\in \mathfrak{B}_{\Lambda }$ then we set 
\begin{equation*}
\int_{\Omega }u\ dx:=\int u\ \overline{\chi_{\Omega }}\ dx
\end{equation*}
\end{definition}

The above definition makes sense for any internal open set (the lambda-limit
of a net of open sets) and more in general for any internal Borel set.
However, the integral extended to a set in $\mathfrak{B}_{\Lambda }$ has
nicer properties, as it will be shown below. For example if $u$ and $\Omega $
are standard, the above integral concides with the usual one.

In the following we want to deal with some classical theorem in field theory such as
Gauss' divergence theorem. To do this we need some new notations. The
gradient and the divergence of a standard function (distribution) or of an
internal function (distribution) will be denoted by%
\begin{equation*}
\func{grad},\ \func{div}
\end{equation*}%
respectively; their generalization to ultrafunctions will be denoted by:%
\begin{equation*}
\nabla ,\ \nabla \cdot .
\end{equation*}%
Namely, if $u\in V_{\Lambda },$ we have that%
\begin{equation*}
\func{grad}u=\left( \partial _{1}u,...,\partial _{N}u\right) ;\ \ \nabla
u=\left( D_{1}u,....,D_{N}u\right) .
\end{equation*}%
Similarly, if $\phi =\left( \phi _{1},...,\phi _{N}\right) \in \left(
V_{\Lambda }\right) ^{N},$ we have that%
\begin{equation*}
\func{div}\phi =\partial _{1}\phi _{1}+....+\partial _{N}\phi _{N};\ \
\nabla \cdot \phi =D_{1}\phi _{1}+....+D_{N}\phi _{N}.
\end{equation*}

If $\Omega \in \mathfrak{B,}$ then $\func{grad}\chi _{\Omega }=\left(
\partial _{1}\chi _{\Omega },...,\partial _{N}\chi _{\Omega }\right) $ is a
vector-valued Radon measure such that, $\forall \phi \in \left( \mathcal{C}%
^{1}(\mathbb{R}^{N})\right) ^{N},$%
\begin{equation}
\left\langle \func{grad}\chi _{\Omega },\phi \right\rangle =-\int_{\Omega }%
\func{div}\phi \ dx  \label{palla}
\end{equation}

As usual, we will denote by $\left\vert \func{grad}\chi _{\Omega
}\right\vert $ the total variation of $\chi _{\Omega },$ namely a Radon
measure defined as follows: for any Borel set $A,$ 
\begin{equation*}
\left\vert \func{grad}\chi _{\Omega }\right\vert (A)=\sup \left\{
\left\langle \func{grad}\chi _{\Omega },\phi \right\rangle \ |\ \phi \in
\left( \mathcal{C}^{1}(\mathbb{R}^{N})\right) ^{N},\ \left\vert \phi (x)\right\vert \leq
1\right\} .
\end{equation*}%
$\left\vert \func{grad}\chi _{\Omega }\right\vert $ is a measure
concentrated on $\partial \Omega $ and the quantity 
\begin{equation*}
\left\langle \func{grad}\chi _{\Omega },1\right\rangle 
\end{equation*}%
is called Caccioppoli perimeter of $\Omega $ (see e.g. \cite{cac})$.$ If $%
\partial \Omega $ is smooth, then $\left\vert \func{grad}\chi _{\Omega
}\right\vert $ agrees with the usual surface measure and hence if $f$ is a
Borel function, $\left\langle f,\left\vert \func{grad}\chi _{\Omega
}\right\vert \right\rangle $ is a generalization of the surface integral $%
\int_{\partial \Omega }f(x)d\sigma .$ This generalization suggests a further
generalization in the framework of ultrafunctions:

\begin{definition}
\label{surf}If $u\in V^{\ast }$ and $\Omega \in \mathfrak{B}_{\Lambda }$
then we set 
\begin{equation*}
\int_{\partial \Omega }u\ d\sigma :=\int u\ \left\vert \nabla \overline{\chi _{\Omega
}}\right\vert \ dx,
\end{equation*}%
where $\left\vert \nabla \overline{\chi _{\Omega }}\right\vert $ is the ultrafunction
defined by the following formula: $\forall v\in V_{\Lambda }$%
\begin{equation*}
\int \left\vert \nabla \overline{\chi _{\Omega }}\right\vert v(x)dx=\left\langle
v,\left\vert \func{grad}\overline{\chi _{\Omega }}\right\vert \right\rangle .
\end{equation*}
\end{definition}

\begin{lemma}
\label{G1} If $\phi \in \left( V_{\Lambda }\right) ^{N}$ and $\Omega \in 
\mathfrak{B}_{\Lambda}$ then%
\begin{equation}
\int_{\Omega }\nabla \cdot \phi \ dx=-\int \phi \cdot \nabla \overline{\chi
_{\Omega }}\ dx.  \label{gauss}
\end{equation}
\end{lemma}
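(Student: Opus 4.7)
The plan is to reduce the identity to the antisymmetry of the ultrafunction derivative (equation \eqref{cinzia}), componentwise. The key observation is that both hypotheses of that theorem are satisfied: each $\phi_j$ lies in $V_\Lambda$ by assumption, and $\overline{\chi_\Omega}$ lies in $V_\Lambda$ because $\Omega \in \mathfrak{B}_\Lambda$ (this is precisely the defining condition of $\mathfrak{B}_\Lambda$).

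First I would unfold the left-hand side using Definition \ref{andromaca}:
\begin{equation*}
\int_\Omega \nabla \cdot \phi \, dx = \int (\nabla \cdot \phi)\, \overline{\chi_\Omega}\, dx = \sum_{j=1}^N \int D_j \phi_j \cdot \overline{\chi_\Omega}\, dx.
\end{equation*}
Next, for each $j$, I apply the antisymmetry relation \eqref{cinzia} with $u = \phi_j \in V_\Lambda$ and $v = \overline{\chi_\Omega} \in V_\Lambda$:
\begin{equation*}
\int D_j \phi_j \cdot \overline{\chi_\Omega}\, dx = -\int \phi_j \cdot D_j \overline{\chi_\Omega}\, dx.
\end{equation*}
Finally, summing over $j$ and recognizing $\nabla \overline{\chi_\Omega} = (D_1 \overline{\chi_\Omega}, \ldots, D_N \overline{\chi_\Omega})$ and $\phi \cdot \nabla \overline{\chi_\Omega} = \sum_j \phi_j D_j \overline{\chi_\Omega}$ yields the right-hand side of \eqref{gauss}.

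There is really no substantial obstacle once the antisymmetry theorem is in hand: the proof is a one-line bookkeeping exercise. The only point that requires a small check is the applicability of antisymmetry, which hinges on the membership $\overline{\chi_\Omega}\in V_\Lambda$; this is built into the hypothesis $\Omega\in \mathfrak{B}_\Lambda$ by the very definition of that class. The real mathematical content of Gauss' theorem has already been absorbed into the antisymmetry theorem (which in turn rests on the fact that $V$ consists of $BV$ functions so that $\partial_j(uv)$ is a finite Radon measure and the pairing $\langle 1, \partial_j(uv)\rangle$ vanishes for compactly supported $u,v$).
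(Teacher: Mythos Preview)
Your proof is correct and follows exactly the same route as the paper: unfold $\int_\Omega$ via Definition~\ref{andromaca}, expand the ultrafunction divergence componentwise, apply the antisymmetry identity~\eqref{cinzia} to each summand (using that $\overline{\chi_\Omega}\in V_\Lambda$ by the definition of $\mathfrak{B}_\Lambda$), and reassemble. Your commentary on where the actual content lies is also accurate.
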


\begin{proof} We have that%
\begin{eqnarray*}
\int_{\Omega }\nabla \cdot \phi \ dx &=&\sum_{j}\int_{\Omega }D_{j}\phi
_{j}\ dx=\sum_{j}\int D_{j}\phi _{j}\overline{\chi _{\Omega }}\ dx \\
&=&-\sum_{j}\int \phi _{j}D_{j}\overline{\chi _{\Omega }}\ dx=-\int \phi
\cdot \nabla \overline{\chi _{\Omega }}\ dx.
\end{eqnarray*}
\end{proof}

We can give to the Gauss theorem a more meaningful form: let%
\begin{equation*}
\nu _{\Omega }(x)=\left\{ 
\begin{array}{cc}
-\frac{\nabla \overline{\chi _{\Omega }}(x)}{\left\vert \nabla \overline{\chi _{\Omega
}}(x)\right\vert } & \text{if}\ \ \left\vert \nabla \chi _{\Omega
}(x)\right\vert \neq 0; \\ 
0 & \text{if}\ \ \left\vert \nabla \chi _{\Omega }(x)\right\vert =0.%
\end{array}%
\right.
\end{equation*}

Let us note that, by construction, $\nu _{\Omega }(x)$ is an internal
function whose support is infinitely close to $\partial\Omega.$

\begin{theorem}
(\textbf{Gauss' divergence theorem for ultrafunctions}) If $\phi \in \left(
V_{\Lambda }\right) ^{N}$ and $\Omega \in \mathfrak{B}_{\Lambda }$ then%
\begin{equation}
\int_{\Omega }\nabla \cdot \phi \ dx=\int_{\partial \Omega }\phi \cdot \nu
_{\Omega }(x)\ d\sigma .
\end{equation}
\end{theorem}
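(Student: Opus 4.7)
The plan is to chain together Lemma \ref{G1}, the pointwise definition of $\nu_\Omega$, and Definition \ref{surf}, in that order. Starting from the identity
\begin{equation*}
\int_{\Omega }\nabla \cdot \phi \ dx=-\int \phi \cdot \nabla \overline{\chi _{\Omega }}\ dx
\end{equation*}
given by Lemma \ref{G1}, I would rewrite the integrand on the right in terms of $\nu_\Omega$. By the very definition of $\nu_\Omega$, at every point $x$ we have the pointwise identity
\begin{equation*}
\nabla \overline{\chi _{\Omega }}(x)=-\nu _{\Omega }(x)\,\bigl|\nabla \overline{\chi _{\Omega }}(x)\bigr|,
\end{equation*}
valid in both branches of the definition (when $|\nabla \overline{\chi_\Omega}(x)|=0$ both sides vanish, and otherwise it is just the factorization of a nonzero vector into modulus times unit vector). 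Substituting gives
\begin{equation*}
\int_{\Omega }\nabla \cdot \phi \ dx=\int \bigl(\phi (x)\cdot \nu _{\Omega }(x)\bigr)\,\bigl|\nabla \overline{\chi _{\Omega }}(x)\bigr|\ dx.
\end{equation*}

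The final step is to recognise the right-hand side as the surface integral $\int_{\partial \Omega }\phi \cdot \nu _{\Omega }\ d\sigma $ via Definition \ref{surf}. This is where the only real subtlety lies: the symbol $|\nabla \overline{\chi _{\Omega }}|$ is used in two a priori different senses in the excerpt. In the formula above, produced by the pointwise identity for $\nu_\Omega$, it denotes the Euclidean norm of the vector-valued ultrafunction $\nabla \overline{\chi _{\Omega }}=(D_{1}\overline{\chi _{\Omega }},\dots ,D_{N}\overline{\chi _{\Omega }})$, while in Definition \ref{surf} it denotes the $P_{\Lambda }$-projection of the total variation measure $|\func{grad}\overline{\chi _{\Omega }}|$. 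I expect this reconciliation to be the main (and essentially the only) obstacle: one has to check that, for every scalar ultrafunction test $v\in V_{\Lambda }$,
\begin{equation*}
\int v(x)\,\bigl|\nabla \overline{\chi _{\Omega }}(x)\bigr|\ dx=\bigl\langle v,|\func{grad}\overline{\chi _{\Omega }}|\bigr\rangle ,
\end{equation*}
so that the two meanings coincide as ultrafunctions, and then apply this with $v=\phi \cdot \nu _{\Omega }$ (which is internal and scalar-valued, hence admissible as the ``$u\in V^{\ast }$'' appearing in Definition \ref{surf}).

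Once that identification is in place, the chain of equalities closes and yields
\begin{equation*}
\int_{\Omega }\nabla \cdot \phi \ dx=\int \bigl(\phi \cdot \nu _{\Omega }\bigr)\,\bigl|\nabla \overline{\chi _{\Omega }}\bigr|\ dx=\int_{\partial \Omega }\phi \cdot \nu _{\Omega }\ d\sigma ,
\end{equation*}
which is the statement of the theorem. No further analytic input (no smoothness of $\partial \Omega$, no approximation by mollifications, no standard-part arguments) is needed: all of the heavy lifting has been absorbed into the antisymmetry of $D_{j}$ used to prove Lemma \ref{G1} and into the definitions of $\nu _{\Omega }$ and of the surface integral.
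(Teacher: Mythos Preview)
Your argument is essentially the same as the paper's: apply Lemma \ref{G1}, use the pointwise identity $\nabla \overline{\chi _{\Omega }}=-\nu _{\Omega }\,|\nabla \overline{\chi _{\Omega }}|$, and then invoke Definition \ref{surf}. The paper writes this chain of equalities in two lines without further comment; in particular, it does not pause over the point you flag---that the symbol $|\nabla \overline{\chi _{\Omega }}|$ in Definition \ref{surf} is \emph{defined} as $P_{\Lambda }(|\func{grad}\overline{\chi _{\Omega }}|)$, whereas in the formula for $\nu _{\Omega }$ it is the pointwise Euclidean modulus of the vector $(D_{1}\overline{\chi _{\Omega }},\dots ,D_{N}\overline{\chi _{\Omega }})$---and simply identifies the two. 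So your proof is the paper's proof, only more explicit about the one notational conflation it relies on.
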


\begin{proof} We have that $\nabla \overline{\chi _{\Omega }}=-\nu _{\Omega
}\left\vert \nabla \overline{\chi _{\Omega }}\right\vert $ and, by using Lemma \ref%
{G1} and Definition \ref{surf}, we get:%
\begin{eqnarray*}
\int_{\Omega }\nabla \cdot \phi \ dx &=&-\int \phi \cdot \nabla \overline{\chi
_{\Omega }}\ dx= \\
\int \phi \cdot \nu _{\Omega }\ \left\vert \nabla \overline{\chi
_{\Omega }}\right\vert \ dx &=&\int_{\partial \Omega }\phi \cdot \nu _{\Omega }\ d\sigma .
\end{eqnarray*}
\end{proof}

\subsection{A simple application\label{asa}}

Let us consider the following Cauchy problem:%
\begin{eqnarray}
\frac{\partial u}{\partial t}+\func{div}F(t,x,u) &=&0;  \label{PC} \\
u(0,x) &=&u_{0}(x),  \notag
\end{eqnarray}%
where $x\in \mathbb{R}^{N}.$ It is well known that this problem has no
classical solutions since it develops singularities.

One way to formulate this problem in the framework of ultrafunctions is the
following:%
\begin{equation*}
\text{find\ }u\in \mathcal{C}^{1}(\left[ 0,T\right] ,V_{\Lambda })\text{
such that:}
\end{equation*}%
\begin{equation}
\forall v\in V_{\Lambda },\ \int \left[ \partial _{t}u+\nabla \cdot F(t,x,u)%
\right] v(x)dx=0;  \label{PT}
\end{equation}%
\begin{equation*}
u(0,x)=u_{0}^{\ast }(x)
\end{equation*}%
where $\partial _{t}=\left( \frac{\partial }{\partial t}\right) ^{\ast }$
and $u_{0}\in \mathcal{C}_{c}^{1}.$

We assume that 
\begin{equation}
F\in \mathcal{C}^{1}  \label{AA}
\end{equation}
and that%
\begin{equation}
\left\vert F(t,x,u)\right\vert \leq c_{1}+c_{2}|u|.  \label{BB}
\end{equation}

\begin{theorem}
Problem (\ref{PT}) has a unique solution and it satisfies the following
conservation law:%
\begin{equation}
\partial _{t}\int_{\Omega }u(t,x)\ dx=-\int_{\partial \Omega
}F(t,x,u(t,x))\cdot \nu _{\Omega }(x)\ d\sigma  \label{CC}
\end{equation}%
for every $\Omega \in \mathfrak{B}_{\Lambda }.$ In particular if $F(t,x,0)=0$
for every $(t,x)\in \lbrack 0,T]\times \mathbb{R}^{N},$ then 
\begin{equation}
\partial _{t}\int u(t,x)\ dx=0.  \label{DD}
\end{equation}
\end{theorem}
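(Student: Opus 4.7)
The plan is to treat (\ref{PT}) as a Cauchy problem for an ordinary differential equation on the hyperfinite-dimensional space $V_{\Lambda}$. Precisely, for $u\in V_{\Lambda}$ one defines $\mathcal{G}(t,u):=-P_{\Lambda}\!\left(\nabla\cdot F(t,x,u)\right)\in V_{\Lambda}$, with $P_{\Lambda}$ as in Definition \ref{tilde}; then (\ref{PT}) is equivalent to the ODE $\dot{u}=\mathcal{G}(t,u)$ with initial datum $u(0)=u_{0}^{\ast}$. The hypothesis (\ref{AA}) guarantees that $\mathcal{G}$ is an internal $\mathcal{C}^{1}$ map on $V_{\Lambda}$, while the linear-growth bound (\ref{BB}) rules out blow-up on $[0,T]$. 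Applying Leibniz' principle to the finite-dimensional Cauchy--Lipschitz (Picard--Lindel\"of) theorem on each $V_{\lambda}$ therefore yields a unique $u\in\mathcal{C}^{1}([0,T],V_{\Lambda})$.

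To establish (\ref{CC}), I fix $\Omega\in\mathfrak{B}_{\Lambda}$ and use $v=\overline{\chi_{\Omega}}\in V_{\Lambda}$ as a legitimate test ultrafunction in (\ref{PT}). Since $u$ is $\mathcal{C}^{1}$ in $t$ with values in $V_{\Lambda}$, the time derivative commutes with the spatial integral against the fixed ultrafunction $\overline{\chi_{\Omega}}$, giving
$$\partial_{t}\int u(t,x)\,\overline{\chi_{\Omega}}(x)\,dx=-\int\nabla\cdot F(t,x,u)\,\overline{\chi_{\Omega}}(x)\,dx.$$
By Definition \ref{andromaca} the left-hand side is $\partial_{t}\int_{\Omega}u(t,x)\,dx$, and applying Gauss' divergence theorem for ultrafunctions (just proved) to the right-hand side rewrites it as $-\int_{\partial\Omega}F(t,x,u(t,x))\cdot\nu_{\Omega}(x)\,d\sigma$. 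Combining these two identities is exactly (\ref{CC}).

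For the special case (\ref{DD}), I want to pick $\Omega\in\mathfrak{B}_{\Lambda}$ so large that $u(t,\cdot)$ vanishes on $\partial\Omega$ for every $t\in[0,T]$; then $F(t,x,u(t,x))=F(t,x,0)=0$ on $\partial\Omega$, the boundary integral in (\ref{CC}) vanishes, and $\int_{\Omega}u=\int u$ so (\ref{DD}) follows. Since $u_{0}\in\mathcal{C}_{c}^{1}$ is supported in some standard ball $B_{R}$, the natural candidate is $\Omega=B_{R'}^{\ast}\in\mathfrak{B}_{\Lambda}$ for a sufficiently large (possibly infinite) $R'$. This is the real obstacle of the proof: one must justify that such an $R'$ exists, i.e.\ that the hyperfinite ODE propagates the compact localization of $u_{0}^{\ast}$ uniformly on $[0,T]$ within some internally bounded set of $\mathfrak{B}_{\Lambda}$. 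This is a finite-speed-of-propagation-type statement that ultimately rests on the locality of the operator $\nabla\cdot F$ singled out by Property \ref{iii}, together with the continuity of $u$ in $t$ and the internal boundedness of supports of elements of $V_{\Lambda}$; the other two ingredients (the hyperfinite-ODE existence argument and the test-function derivation of (\ref{CC})) are essentially mechanical given the machinery developed earlier in the paper.
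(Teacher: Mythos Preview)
Your argument matches the paper's almost exactly. For existence and uniqueness the paper does precisely what your appeal to Leibniz' principle unfolds to: it solves the finite-dimensional ODE $\partial_t u + P_\lambda \operatorname{div} F(t,x,u)=0$ on each $V_\lambda$ (global existence coming from (\ref{AA}) and (\ref{BB})) and then takes the $\Lambda$-limit of the net $(u_\lambda)$. For (\ref{CC}) the paper, like you, tests with $\overline{\chi_\Omega}\in V_\Lambda$ and applies the ultrafunction Gauss theorem.

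For (\ref{DD}) you are making the last step harder than necessary. No finite-speed-of-propagation or locality argument enters. The paper simply notes that $u(t,\cdot)\in V_\Lambda$ for every $t$, and since $V_\Lambda$ is hyperfinite-dimensional and contained in $(BV_c)^{\ast}$, \emph{all} of its elements share a common internal bounded support: this is the $\Lambda$-limit of the elementary fact that each finite-dimensional $V_\lambda\subset BV_c$ has its elements supported in one fixed compact set. Hence there is a single $R$ (possibly infinite) with $u(t,x)=0$ for $|x|\geq R$ and all $t\in[0,T]$, so $F(t,x,u(t,x))=F(t,x,0)=0$ there, and the paper just takes $\Omega=B_R$ in (\ref{CC}). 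Your own remark about ``the internal boundedness of supports of elements of $V_\Lambda$'' is exactly the right ingredient; the propagation and locality considerations you add are red herrings. (The paper, like your sketch, does not pause to verify that this $B_R$ lies in $\mathfrak{B}_\Lambda$; that point is treated loosely in both.)
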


\begin{proof} First let us prove the existence. For every $\lambda \in 
\mathfrak{L}$, let us consider the problem%
\begin{equation*}
\text{find}\ u\in \mathcal{C}^{1}(\left[ 0,T\right] ,V_{\lambda })\ \text{%
such\ that:}
\end{equation*}%
\begin{eqnarray}
\partial _{t}u+P_{\lambda }\func{div}F(t,x,u) &=&0;  \label{PTL} \\
u(0,x) &=&u_{0}^{\ast }(x),  \notag
\end{eqnarray}%
where $P_{\lambda }:\mathfrak{M}\rightarrow V_{\lambda }$ is the "orthogonal
projection", namely, for every $\mu \in \mathfrak{M},$ $P_{\lambda }\mu $ is
the only element in $V_{\lambda }$ such that,%
\begin{equation*}
\forall v\in V_{\lambda },\ \int P_{\lambda }\mu \ v\ dx=\left\langle v,\mu
\right\rangle .
\end{equation*}%
In the above equation we have assumed that $\lambda $ is so large that $%
u_{0}^{\ast }(x)\in V_{\lambda }.$ Equation (\ref{PTL}) reduces to an
ordinary differential equation in a finite dimensional space and hence, by (%
\ref{AA}) and (\ref{BB}), it has a unique global solution $u_{\lambda }.$
Equation (\ref{PTL}) can be rewritten in the following equivalent form:%
\begin{equation*}
\forall v\in V_{\lambda },\ \int \left[ \partial _{t}u+\func{div}F(t,x,u)%
\right] v(x)dx=0.
\end{equation*}%
Taking the $\Lambda $-limit, we get a unique solution of (\ref{PT}).

Equation (\ref{CC}) follows, as usual, from Gauss' theorem:

\begin{eqnarray*}
\partial _{t}\int_{\Omega }u(t,x)dx &=&\int_{\Omega }\partial _{t}u(t,x)dx=%
\text{(by eq. (\ref{PT})\ with }v=\widetilde{1}\text{)} \\
-\int_{\Omega }\nabla \cdot F(t,x,u(t,x))dx &=&\int_{\partial \Omega
}-F(t,x,u(t,x))\cdot \nu _{\Omega }(x)d\sigma.
\end{eqnarray*}

In particular, if $F(t,x,0)=0,$ since $u$ has compact support, we have that $%
F(t,x,u(t,x))=0$ if $|x|\geq R$ with $R$ is sufficiently large. Then, taking 
$\Omega =B_{R},$ (\ref{DD}) follows. \end{proof}

\end{document}